\newtheorem{theorem}[subsection]{Theorem}
\newtheorem*{theorem*}{Theorem}
\newtheorem{lemma}[subsection]{Lemma}
\newtheorem{proposition}[subsection]{Proposition}
\newtheorem*{conjecture*}{Conjecture}
\newtheorem*{question*}{Question}
\theoremstyle{remark}
\newtheorem{example}[subsection]{Example}
\theoremstyle{definition}
\newtheorem{definition}[subsection]{Definition}
\newtheorem*{notation*}{Notation}
\numberwithin{equation}{section}
\begin{document}

\title[]{Excellent extensions and homological conjectures}

\author{Yingying Zhang}

\thanks{MSC 2010: 16E10, 16E30. }
\thanks{Key words: Excellent extension, Skew group algebra, Homological conjecture.}
\address{Department of Mathematics, Hohai University, Nanjing 210098, Jiangsu Province, P.R. China}

\email{zhangying1221@sina.cn}

\begin{abstract}
In this paper, we introduce the notion of excellent extension of rings. Let $\Gamma$ be an excellent extension of an artin algebra $\Lambda$, we prove that $\Lambda$ satisfies the Gorenstein symmetry conjecture (resp. finitistic dimension conjecture, Auslander-Gorenstein conjecture, Nakayama conjecture) if and only if so does $\Gamma$.  As a special case of excellent extensions,  if $G$ is a finite group whose order is invertible in $\Lambda$ acting on $\Lambda$ and $\Lambda$ is $G$-stable,  we prove that if the skew group algebras $\Lambda G$ satisfies strong Nakayama conjecture (resp. generalized Nakayama conjecture), then so does $\Lambda$.
\end{abstract}

\maketitle

\section{Introduction}\label{s:introduction}
\medskip
Let $\Lambda$ be an artin algebra and all modules are finitely generated unless stated otherwise.
Denote by $\mathrm{mod}\Lambda$ the category of finitely generated left $\Lambda$-modules. For a module $M\in\mathrm{mod}\Lambda$, ${\rm pd}_{\Lambda}M$ and ${\rm id}_{\Lambda}M$
are the projective and injective dimensions of $M$ respectively.

The following homological conjectures are very important in the representation theory of artin algebras.

\textbf{Auslander-Reiten Conjecture (ARC)} Any module $M\in\mathrm{mod}\Lambda$ satisfying ${\rm Ext}_{\Lambda}^{\geq 1}(M, M\oplus \Lambda)=0$
implies that $M$ is projective.

\vspace{0.2cm}

\textbf{Gorenstein Projective Conjecture (GPC)} If $M$ is a Gorenstein projective $\Lambda$-module
such that ${\rm Ext}_{\Lambda}^{\geq 1}(M, M)=0$, then $M$ is projective.

\vspace{0.2cm}

\textbf{Strong Nakayama Conjecture (SNC)} Any module $M\in\mathrm{mod}\Lambda$ satisfying ${\rm Ext}_{\Lambda}^{\geq 0}(M, \Lambda)=0$
implies $M=0$.

\vspace{0.2cm}

\textbf{Generalized Nakayama Conjecture (GNC)} For any simple module $S\in\mathrm{mod}\Lambda$, there exists $i\geq 0$ such that
${\rm Ext}_{\Lambda}^{i}(S, \Lambda)\neq 0$.

\vspace{0.2cm}

Let
$$0\rightarrow\Lambda\rightarrow I^{0}\rightarrow I^{1}\rightarrow\cdots$$ be a minimal injective resolution of the $\Lambda$-module $\Lambda$.

\vspace{0.2cm}

\textbf{Auslander-Gorenstein Conjecture (AGC)}
If pd$_{\Lambda}I^{i}\leq i$ for any $i\geq 0$, then $\Lambda$ is Gorenstein (that is, the left and right self-injective dimensions of $\Lambda$ are finite).

\vspace{0.2cm}

\textbf{Nakayama Conjecture (NC)} If $I^{i}$ is projective for any $i\geq0$, then $\Lambda$ is self-injective.

\vspace{0.2cm}

\textbf{Wakamatsu Tilting Conjecture (WTC)} Let $T_{\Lambda}$ be a Wakamatsu tilting module with $\Gamma={\rm End}(T_{\Lambda})$.
Then ${\rm id\,}T_{\Lambda}={\rm id\,}_{\Gamma}T$.

\vspace{0.2cm}

\textbf{Gorenstein Symmetric Conjecture (GSC)} ${\rm id\,}\Lambda_{\Lambda}={\rm id\,}_{\Lambda}\Lambda$.

\vspace{0.2cm}

\textbf{Finitistic Dimension Conjecture (FDC)} ${\rm findim\,}\Lambda:={\rm sup\,}\{{\rm pd\,}_{\Lambda}M|M\in{\rm mod\,}\Lambda$
with ${\rm pd\,}_{\Lambda}M<\infty\}<\infty$.

\vspace{0.2cm}

These conjectures remain still open now, and there are close relations among them as follows which means the implications hold true for each artin algebra.
$$\xymatrix{
FDC \ar@{=>}[r] \ar@{=>}[d] & SNC \ar@{=>}[r] & GNC \ar@{=>}[r] \ar@{<=>}[d] & AGC \ar@{=>}[r] & NC \\
WTC \ar@{=>}[d] \ar@{=>}[urr] && ARC \ar@{=>}[d] && \\
GSC && GPC. &&
}$$
We refer to [AR, BFS, CT, FZ, LH, LJ, W1, W2, W3, X1, X2, X3, Y, Z] for details.

The notion of excellent extension of rings was introduced by Passman in [P] which is important in studying the algebraic structure of group rings.  We will give some common examples of excellent extension of rings in this paper (see Example 2.2 for details). Many algebraists have studied the invariant properties of artin algebras under excellent extensions such as the projectivity, injectivity, finite representation type, CM-finite, CM-free and representation dimension (see [Bo, HS, L, P, PS] and so on). As a special case of excellent extensions, Reiten and Riedtmann introduced the notion of skew group algebras in [RR].  In this paper, we will connect excellent extensions with homological conjectures. The outline of this article is as follows.

In Section 2, we give some terminology and some known results that will be used in the later part.

In Section 3, we aim to prove the following
\begin{theorem}{\rm ([Theorem 3.2 and 3.3])}
If $\Gamma$ is an excellent extension of an artin algebra $\Lambda$, then $\Lambda$ satisfies AGC (resp. NC, GSC, FDC) if and only if so does $\Gamma$.
\end{theorem}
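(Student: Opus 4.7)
My plan is to reduce all four equivalences to the same pattern by leveraging the following ``dictionary'' for an excellent extension, which I assume is established in Section~2: restriction and induction $\Gamma\otimes_\Lambda-$ are both exact and both preserve projectives and injectives; as $\Lambda$-modules one has ${}_\Lambda\Gamma\cong{}_\Lambda\Lambda^n$ and $\Gamma_\Lambda\cong\Lambda_\Lambda^n$; every $M\in\mod\Lambda$ is a $\Lambda$-summand of $\Gamma\otimes_\Lambda M$; and consequently ${\rm pd}_\Lambda M={\rm pd}_\Gamma(\Gamma\otimes_\Lambda M)$ for $M\in\mod\Lambda$, while ${\rm pd}_\Lambda N={\rm pd}_\Gamma N$ and ${\rm id}_\Lambda N={\rm id}_\Gamma N$ for $N\in\mod\Gamma$.

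For FDC this dictionary is already enough. If ${\rm pd}_\Gamma N<\infty$ then ${\rm pd}_\Lambda N={\rm pd}_\Gamma N\leq{\rm findim\,}\Lambda$, so ${\rm findim\,}\Gamma\leq{\rm findim\,}\Lambda$; conversely, if ${\rm pd}_\Lambda M<\infty$ then ${\rm pd}_\Gamma(\Gamma\otimes_\Lambda M)={\rm pd}_\Lambda M\leq{\rm findim\,}\Gamma$, yielding ${\rm findim\,}\Lambda\leq{\rm findim\,}\Gamma$. For GSC, the direct computation
\[ {\rm id}_\Gamma\Gamma={\rm id}_\Lambda\Gamma={\rm id}_\Lambda\Lambda^n={\rm id}_\Lambda\Lambda \]
(and its right-hand analogue) shows that the two one-sided self-injective dimensions agree for $\Lambda$ if and only if they agree for $\Gamma$.

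The substantive part is NC and AGC, because both hypothesise a property of the \emph{minimal} injective coresolution of the algebra. The tool I would use throughout is that the minimal injective coresolution of any module is a term-wise direct summand of every injective coresolution, so properties such as ``projective'' or ``${\rm pd}\leq i$'' pass from a non-minimal coresolution to the minimal one. To show $\Gamma$ AGC $\Rightarrow\Lambda$ AGC: start with a minimal coresolution $0\to\Lambda\to I^0\to I^1\to\cdots$ satisfying ${\rm pd}_\Lambda I^i\leq i$; applying $\Gamma\otimes_\Lambda-$ gives an injective coresolution of $\Gamma$ whose $i$-th term $\Gamma\otimes_\Lambda I^i$ still has ${\rm pd}_\Gamma\leq i$; the minimal coresolution $J^i$ of $\Gamma$ is a summand, hence ${\rm pd}_\Gamma J^i\leq i$, so $\Gamma$ AGC gives $\Gamma$ Gorenstein; restricting one-sided injective dimensions to $\Lambda$ then yields ${\rm id}_\Lambda\Lambda^n={\rm id}_\Lambda\Gamma={\rm id}_\Gamma\Gamma<\infty$, and symmetrically for $\Gamma_\Gamma$, so $\Lambda$ is Gorenstein. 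The converse direction restricts a minimal coresolution of ${}_\Gamma\Gamma$ to an injective coresolution of ${}_\Lambda\Lambda^n$ with preserved ${\rm pd}$-bounds, extracts a minimal coresolution of ${}_\Lambda\Lambda$ as a summand, and applies $\Lambda$ AGC. NC is the $i=0$ case, replacing ``${\rm pd}\leq i$'' by ``projective''; the final conclusion ``$\Lambda$ (resp.\ $\Gamma$) is self-injective'' transfers between the algebras because ${}_\Lambda\Gamma\cong{}_\Lambda\Lambda^n$ and because, for a $\Gamma$-module, $\Lambda$-injectivity and $\Gamma$-injectivity coincide (the latter using the ``$\Lambda$-projective'' clause of the excellent extension definition).

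The main obstacle I anticipate is precisely this minimality bookkeeping: neither $\Gamma\otimes_\Lambda I^i$ nor $(J^i)|_\Lambda$ need be minimal, so one cannot compare minimal coresolutions term by term, and the summand argument above is what rescues things. Everything else is a formal consequence of the Section~2 dictionary, whose real content lies in the preservation of injectives under restriction and induction — this is what ``excellent'' buys beyond the mere $\Lambda$-freeness of $\Gamma$.
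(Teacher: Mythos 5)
Your proposal is correct and follows essentially the same route as the paper: exactness of induction and restriction together with preservation of injectives gives $\mathrm{id}\,\Lambda_\Lambda=\mathrm{id}\,\Gamma_\Gamma$ (the paper's Lemma 3.1), GSC and the Gorenstein/self-injective transfer follow formally, and AGC/NC are handled by inducing (resp.\ restricting, using freeness of $_\Lambda\Gamma$) the minimal injective coresolution and extracting the minimal one as a termwise summand, exactly as in the paper's Theorem 3.3. The only cosmetic difference is that for FDC the paper simply cites [HS, Proposition 3.6(1)], whereas you reprove that statement via the projective-dimension identities.
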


\begin{theorem}{\rm ([Theorem 3.6])}
Let $\Lambda$ be an artin algebra and $G$ a finite group whose order is invertible in $\Lambda$ acting on $\Lambda$. If $\Lambda$ is $G$-stable and the skew group algebra $\Lambda G$ satisfies SNC
(resp. GNC), then so does $\Lambda$.
\end{theorem}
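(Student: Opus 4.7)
The plan is to induce from $\Lambda$ up to $\Lambda G$ along the functor $\Lambda G \otimes_\Lambda -$ and then exploit that $\Lambda G$ is free of rank $|G|$ as a left (and as a right) $\Lambda$-module. Projectivity of $\Lambda G$ on the right makes the induction--restriction adjunction derive, while $(\Lambda G)|_\Lambda \cong \Lambda^{|G|}$ on the left gives a clean splitting of the second variable. So for every $X \in \mod \Lambda$ and every $i \geq 0$,
\[
\Ext^{i}_{\Lambda G}(\Lambda G \otimes_\Lambda X,\, \Lambda G) \;\cong\; \Ext^{i}_\Lambda\!\bigl(X,\, (\Lambda G)|_\Lambda\bigr) \;\cong\; \Ext^{i}_\Lambda(X, \Lambda)^{|G|}.
\]
This identity will serve as the homological engine for both parts; it needs only that $\Lambda G$ is free over $\Lambda$, and the hypothesis that $|G|$ is invertible in $\Lambda$ will enter only in the GNC half.

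For SNC, suppose $M \in \mod \Lambda$ satisfies $\Ext^{i}_\Lambda(M, \Lambda) = 0$ for all $i \geq 0$, and set $N := \Lambda G \otimes_\Lambda M$, which lies in $\mod \Lambda G$ because $\Lambda G$ is finitely generated over $\Lambda$. The displayed identity gives $\Ext^{i}_{\Lambda G}(N, \Lambda G) = 0$ for all $i \geq 0$, and SNC for $\Lambda G$ forces $N = 0$. To conclude $M = 0$ I would decompose $N$ as a left $\Lambda$-module, $N \cong \bigoplus_{g \in G} {}^{g}M$, where ${}^{g}M$ is $M$ with $\Lambda$-action twisted through the $G$-action on $\Lambda$; since the $g = e$ summand is $M$ itself, the vanishing of $N$ yields $M = 0$.

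For GNC, I would argue by contraposition. Suppose some simple $S \in \mod \Lambda$ satisfies $\Ext^{i}_\Lambda(S, \Lambda) = 0$ for all $i \geq 0$; the same identity yields $\Ext^{i}_{\Lambda G}(\Lambda G \otimes_\Lambda S, \Lambda G) = 0$ for all $i \geq 0$. The main obstacle, and essentially the only nontrivial step, is to produce a \emph{simple} $\Lambda G$-witness, since $\Lambda G \otimes_\Lambda S$ need not itself be simple. I would prove the key lemma that, under the assumption that $|G|$ is invertible in $\Lambda$, the induced module $\Lambda G \otimes_\Lambda S$ is semisimple over $\Lambda G$. This proceeds in two steps: its restriction $\bigoplus_{g \in G} {}^{g}S$ is semisimple over $\Lambda$ because each twist ${}^{g}S$ is simple ($g$ being an algebra automorphism of $\Lambda$); then a Maschke-type averaging (namely $\tfrac{1}{|G|}\sum_{g} g(-)g^{-1}$ applied to any $\Lambda$-linear retraction onto a $\Lambda G$-submodule) upgrades $\Lambda$-splittings to $\Lambda G$-splittings, which is precisely where the invertibility of $|G|$ and the $G$-stability of $\Lambda$ are essential. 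Once we write $\Lambda G \otimes_\Lambda S = \bigoplus_j T_j$ as a sum of simple $\Lambda G$-modules, the Ext-vanishing descends to each summand, and any $T_j$ then contradicts GNC for $\Lambda G$.
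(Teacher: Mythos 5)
Your proposal is correct and follows essentially the same route as the paper: your displayed identity is the paper's Proposition 3.5 specialized to $N=\Lambda$ (the paper derives $\Ext^i_{\Lambda G}(FM,F\Lambda)\cong\Ext^i_\Lambda(M,\Lambda)^{|G|}$ from the induction--restriction adjunction applied to a projective resolution, which for $N=\Lambda$ amounts to your observation that $\Lambda G$ is free of rank $|G|$ over $\Lambda$), and both the SNC and GNC arguments then proceed exactly as in the paper, recovering $M$ as the identity summand of $\bigoplus_{g}{}^{g}M$ and reducing GNC to the simple summands of the semisimple module $\Lambda G\otimes_\Lambda S$. The only difference is that where you sketch a Maschke-type averaging proof that $\Lambda G\otimes_\Lambda S$ is semisimple over $\Lambda G$, the paper simply cites Formanek--Jategaonkar [FJ, Theorem 4]; your sketch is a correct proof of that cited fact.
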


\section{Preliminaries}

\vspace{0.2cm}

In this section, we give some terminology and preliminary results.

\subsection*{Excellent extensions}

First we recall the notion of weak excellent extensions of rings as a generalization of that of excellent extensions of rings.
\begin{definition}
Let $\Lambda$ be a subring of a ring $\Gamma$ such that $\Lambda$ and $\Gamma$ have the same identity. Then $\Gamma$ is called a ring extension of $\Lambda$, and denoted by $\Gamma\geq\Lambda$. A ring extension $\Gamma\geq\Lambda$ is called a weak excellent extension if:
\begin{itemize}
\item[(1)]
$\Gamma$ is right $\Lambda$-projective [P, p. 273], that is $N_{\Gamma}$ is a submodule of $M_{\Gamma}$ and if $N_{\Lambda}$ is a direct summand of $M_{\Lambda}$, denote by $N_{\Lambda}|M_{\Lambda}$, then $N_{\Gamma}|M_{\Gamma}$.
\item[(2)]
$\Gamma$ is finite extension of $\Lambda$, that is, there exist $\gamma_{1},\ldots, \gamma_{n}\in\Gamma$ such that $\Gamma=\sum^{n}_{i=1}\gamma_{i}\Lambda$.
\item[(3)]
$\Gamma_{\Lambda}$ is flat and $_{\Lambda}\Gamma$ is projective.
\end{itemize}
\end{definition}

Recall from [Bo, P] that a ring extension $\Gamma\geq\Lambda$ is called an excellent extension if it is weak excellent and $\Gamma_{\Lambda}$ and $_{\Lambda}\Gamma$ are free with a common basis ${\gamma_{1},\ldots, \gamma_{n}}$, such that $\Lambda\gamma_{i}=\gamma_{i}\Lambda$ for any $1\leq i\leq n$.  Here we list some examples of excellent extensions.

\begin{example}{\rm [ARS, Bo, P, RR]}
\begin{itemize}
\item[(1)]
For a ring $\Lambda$, $M_{n}(\Lambda)$ (the matrix ring of $\Lambda$ of degree $n$) is an excellent extension of $\Lambda$.
\item[(2)]
Let $\Lambda$ be a ring and $G$ a finite group. If $|G|^{-1}\in\Lambda$, then the skew group ring $\Lambda G$ is an excellent extension of $\Lambda$.
\item[(3)]
Let $A$ be a finite-dimensional algebra over a field $K$, and let $F$ be a finite separable field extension of $K$. Then $A\otimes_{K}F$ is an excellent extension of $A$.
\item[(4)]
Let $K$ be a field, and let $G$ be a group and $H$ a normal subgroup of $G$. If $[G:H]$ is finite and is not zero in $K$, then $KG$ is an excellent extension of $KH$.
\item[(5)]
Let $K$ be a field of characteristic $p$, and let $G$ be a finite group and $H$ a normal subgroup of $G$. If $H$ contains a Sylow $p$-subgroup of $G$, then $KG$ is an excellent extension of $KH$.
\item[(6)]
Let $K$ be a field and $G$ a finite group. If $G$ acts on $K$ (as field automorphisms) with kernel $H$, then the skew group ring $K\ast G$ is an excellent extension of the group ring $KH$, and the center $Z(K\ast G)$ of $K\ast G$ is an excellent extension of the center $Z(KH)$ of $KH$.
\end{itemize}
\end{example}

\begin{proposition}{\rm [HS, Lemma 3.5]}
Let $\Gamma\geq \Lambda$ be a weak excellent extension. If $\Lambda$ is an artin algebra, then so is $\Gamma$.
\end{proposition}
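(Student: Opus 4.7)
My plan is to transfer the artin-algebra structure from $\Lambda$ up to $\Gamma$ via the finite-extension condition. Since $\Lambda$ is an artin algebra, I first fix a commutative artinian ring $R\subseteq Z(\Lambda)$ such that $\Lambda = \sum_{j=1}^{m}R\lambda_{j}$ is finitely generated as an $R$-module. By Definition 2.1(2), $\Gamma = \sum_{i=1}^{n}\gamma_{i}\Lambda$, and combining the two decompositions gives $\Gamma = \sum_{i,j}\gamma_{i}\lambda_{j}R$. Thus $\Gamma$ is finitely generated as a (right) $R$-module through the embedding $R\hookrightarrow\Lambda\hookrightarrow\Gamma$.

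The key obstacle is that $R$ need not lie in $Z(\Gamma)$, so $R$ itself does not yet witness $\Gamma$ as an artin algebra. To locate a commutative artinian subring of $Z(\Gamma)$ I would consider
$$R':=\{\,r\in R\,:\,r\gamma_{i}=\gamma_{i}r\ \text{for all } i=1,\dots,n\,\},$$
a subring of $R$ that automatically sits inside $Z(\Gamma)$, since $R$ already commutes with all of $\Lambda$ and the $\gamma_{i}$ generate $\Gamma$ over $\Lambda$ on the right. The remaining step is then to verify that $R'$ is artinian and that $\Gamma$ is finitely generated as an $R'$-module; this is where I expect the main difficulty to lie.

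A natural route to overcome the obstacle is to show that $R$ is integral, and hence module-finite, over $R'$. From this, Eakin--Nagata descent of the Noetherian property combined with the dimension bound $\dim R'\le\dim R=0$ yields artinianness of $R'$ via Hopkins--Levitzki, while finite generation of $\Gamma$ over $R'$ follows by transitivity through $R$. In the skew group algebra case of Example 2.2(2) this step is transparent: one takes $R'=R^{G}$, with integrality witnessed by the characteristic polynomial $\prod_{g\in G}(x-g\cdot r)\in R^{G}[x]$. For a general weak excellent extension, conditions (1) and (3) of Definition 2.1 supply exactly the rigidity needed to make the analogous finiteness statement work, and this is the content of the cited [HS, Lemma 3.5]. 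Once this is in place, $\Gamma$ is a finitely generated module over the commutative artinian ring $R'\subseteq Z(\Gamma)$, i.e., an artin algebra.
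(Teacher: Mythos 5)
The paper does not actually prove this proposition; it imports it verbatim from [HS, Lemma 3.5], so there is no internal argument to compare yours against. Judged on its own terms, your attempt has a genuine gap exactly where you flag it. The correct part: $\Gamma=\sum_{i,j}\gamma_i\lambda_jR$ is finitely generated over the artinian ring $R=Z(\Lambda)$ (or a chosen $R\subseteq Z(\Lambda)$), and your $R'=\{r\in R: r\gamma_i=\gamma_ir \text{ for all } i\}$ does land in $Z(\Gamma)$. But the entire content of the proposition is the step you leave open --- that $R'$ is artinian and that $R$ (hence $\Gamma$) is module-finite over $R'$ --- and your resolution of it is to say that ``this is the content of the cited [HS, Lemma 3.5].'' That is circular: [HS, Lemma 3.5] \emph{is} the statement you are proving.

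Moreover, the proposed route to close the gap does not generalize from your model example. The characteristic-polynomial argument $\prod_{g\in G}(x-g\cdot r)\in R^G[x]$ requires a finite group acting on $R$ by ring automorphisms. For an excellent extension one at least has $\Lambda\gamma_i=\gamma_i\Lambda$, so conjugation by each $\gamma_i$ induces an automorphism of $\Lambda$ (hence of $R$); but these automorphisms need not generate a finite group, and the invariant ring of an infinite group of automorphisms of an artinian ring need not be artinian nor have $R$ integral over it (subrings of artinian commutative rings can fail to be artinian, e.g.\ $\mathbb{Z}\subseteq\mathbb{Q}$). For a \emph{weak} excellent extension the situation is worse: conditions (1)--(3) of Definition 2.1 say nothing about conjugation by the $\gamma_i$ preserving $\Lambda$ or $R$ at all --- relative projectivity, finiteness of $\Gamma_\Lambda$, and flatness/projectivity of the bimodule structures give no control over $Z(\Gamma)$ --- so the assertion that they ``supply exactly the rigidity needed'' is unsupported. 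To repair the proof you would need a genuinely different mechanism for producing a central artinian subring over which $\Gamma$ is module-finite (for instance, working with $Z(\Gamma)$ directly and exploiting that $\Gamma$ has finite length over $R$ on both sides), rather than the invariant-ring construction.
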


From now on, let $\Lambda$ be an artin algebra and $\Gamma\geq\Lambda$ be an excellent extension. Then by Proposition 2.3 it follows that $\Gamma$ is also an artin algebra. By the adjoint isomorphism theorem we have the following adjoint pair ($F$,$H$): $$F:=\Gamma_{\Lambda}\otimes-: {\rm mod\,}\Lambda\to {\rm mod\,}\Gamma,$$
$$H:={\rm Hom}_{\Gamma}(\Gamma,-): {\rm mod\,}\Gamma\to {\rm mod\,}\Lambda.$$
Then by [HS, Lemma 4.7] we have
\begin{lemma}
Both $(F,H)$ and $(H,F)$ are adjoint pairs.
\end{lemma}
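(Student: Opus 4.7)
The plan is to establish the two adjunctions in Lemma 2.4 separately, with the excellent-extension hypothesis entering only in the second.

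For $(F,H)$, this is the classical extension-of-scalars/restriction-of-scalars adjunction. Since $\Gamma$ is a $(\Gamma,\Lambda)$-bimodule, tensor-Hom adjointness gives, for $M\in\mod\Lambda$ and $N\in\mod\Gamma$, a natural isomorphism
\[
\Hom_\Gamma(\Gamma\otimes_\Lambda M,\,N)\;\cong\;\Hom_\Lambda(M,\,\Hom_\Gamma(\Gamma,N))\;=\;\Hom_\Lambda(M,H(N)).
\]
This step needs no hypothesis beyond $\Gamma\supseteq\Lambda$ being a ring extension; it does not use Definition 2.1 at all.

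For $(H,F)$, I would pivot through coinduction. The pair $(H,\,\Hom_\Lambda(\Gamma,-))$ is always an adjunction, because restriction along a ring map has coinduction as its right adjoint. By uniqueness of right adjoints up to natural isomorphism, proving that $(H,F)$ is an adjoint pair reduces to producing a natural $\Gamma$-linear isomorphism
\[
\Gamma\otimes_\Lambda M\;\cong\;\Hom_\Lambda(\Gamma,M),\qquad M\in\mod\Lambda.
\]
Equivalently, one shows that $\Gamma\geq\Lambda$ is a Frobenius extension, and this is exactly where the excellent-extension data come in.

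The concrete construction uses the common basis $\{\gamma_1=1,\gamma_2,\dots,\gamma_n\}$ for $\Gamma_\Lambda$ and $_\Lambda\Gamma$. I would define the trace $\pi\colon\Gamma\to\Lambda$ by projection onto the $\gamma_1$-coordinate, and use the centralizing condition $\Lambda\gamma_i=\gamma_i\Lambda$ to check that $\pi$ is a $\Lambda$-bimodule homomorphism (this is the one place the full strength of ``excellent'' is used rather than just ``weak excellent''). Then the map
\[
\phi_M\colon\Gamma\otimes_\Lambda M\longrightarrow\Hom_\Lambda(\Gamma,M),\qquad \phi_M(\gamma\otimes m)(\gamma')=\pi(\gamma'\gamma)\cdot m,
\]
is well-defined, natural in $M$, and left-$\Gamma$-linear; an explicit inverse is built from the dual basis of $\{\gamma_i\}$ with respect to the Frobenius form induced by $\pi$.

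The main obstacle is the Frobenius-extension bookkeeping: showing $\pi$ is genuinely a bimodule map (not merely one-sided), writing down the correct dual basis, and verifying left-$\Gamma$-equivariance of $\phi_M$. Once these verifications are done, both adjunctions follow; in particular $F$ is simultaneously left and right adjoint to $H$, which is the structural input that powers the homological-conjecture transfer results of Section~3.
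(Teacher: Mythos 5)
Your treatment of the $(F,H)$ adjunction is correct and complete: it is the standard tensor--Hom adjunction for the bimodule ${}_\Gamma\Gamma_\Lambda$ and, as you note, uses nothing from Definition~2.1. For comparison, the paper gives no argument at all for this lemma --- it is quoted verbatim from [HS, Lemma 4.7] --- so yours is the only proof on the table. Your reduction of $(H,F)$ to a natural $\Gamma$-linear isomorphism $\Gamma\otimes_\Lambda M\cong\Hom_\Lambda(\Gamma,M)$, i.e.\ to $\Gamma\geq\Lambda$ being a Frobenius extension, is the right reformulation (by uniqueness of right adjoints it is in fact \emph{equivalent} to the statement), and the check that the coordinate projection $\pi$ onto $\gamma_1=1$ is a $(\Lambda,\Lambda)$-bimodule map does go through using $\Lambda\gamma_i=\gamma_i\Lambda$ and the fact that the basis is common to both sides.

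There is, however, a genuine gap at the decisive step. Saying that ``an explicit inverse is built from the dual basis of $\{\gamma_i\}$ with respect to the Frobenius form induced by $\pi$'' presupposes that such a dual basis exists, i.e.\ that there are $x_i,y_i\in\Gamma$ with $\sum_i x_i\pi(y_i\gamma)=\gamma=\sum_i\pi(\gamma x_i)y_i$; but that nondegeneracy is exactly the Frobenius property you set out to prove, so at this point the proposal restates the problem rather than solving it. Nothing in Definition~2.1 hands you these elements: each $\gamma_i$ induces a ring automorphism $\alpha_i$ of $\Lambda$ via $\lambda\gamma_i=\gamma_i\alpha_i(\lambda)$, so $\Gamma\cong\bigoplus_i{}^{\alpha_i}\Lambda$ as bimodules while $\Hom_\Lambda(\Gamma_\Lambda,\Lambda)\cong\bigoplus_i{}^{\alpha_i^{-1}}\Lambda$, and one must actually argue that $\gamma\mapsto\pi(-\,\gamma)$ is bijective and $\Gamma$-equivariant. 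This is immediate in the motivating examples (dual basis $\{\sigma^{-1}\}$ for $\Lambda G$, matrix units for $M_n(\Lambda)$) but for a general excellent extension it is precisely the content of [HS, Lemma 4.7], and may require input --- such as the relative projectivity axiom (1) --- that your sketch never invokes. A minor further point: your $\pi$ needs $\gamma_1=1$ to belong to the common basis, which is standard in Passman's formulation but is not stated in Definition~2.1 as written.
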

So it follows that $F$ and $H$ are both exact functors therefore preserve projective and injective modules.

\subsection*{Skew group algebras}
Let $\Lambda$ be an algebra and $G$ be a group with identity 1 acting on $\Lambda$, that is, a map $G\times \Lambda \longrightarrow \Lambda$ via $(\sigma,\lambda)\mapsto \sigma(\lambda)$ such that
\begin{itemize}
\item[(a)] The map $\sigma$: $\Lambda \rightarrow \Lambda$ is an algebra automorphism for each $\sigma$ in $G$.
\item[(b)] $(\sigma_{1} \sigma_{2})(\lambda)=\sigma_{1}(\sigma_{2}(\lambda))$ for all $\sigma_{1}$,$\sigma_{2}\in G$ and $\lambda\in\Lambda$.
\item[(c)] 1$(\lambda)$=$\lambda$ for all $\lambda \in\Lambda$.
\end{itemize}

Let $\Lambda$ be an artin algebra, $G$ a finite group whose order is invertible in $\Lambda$ and
$G\longrightarrow Aut\Lambda$ a group homomorphism. The data involved in defining a new category of $\Lambda$-modules in terms of $G$:
\begin{itemize}
\item[(1)] For any $X\in \mathrm{mod}\Lambda$ and $\sigma\in G$,
let $^{\sigma}\hspace{-2pt}X$ be the $\Lambda$-module as follows: as a $k$-vector space $^{\sigma}\hspace{-2pt}X=X$,
the action on $^{\sigma}\hspace{-2pt}X$ is given by $\lambda\cdot x=\sigma^{-1}(\lambda)x$ for all $\lambda\in\Lambda$ and $x\in X$.
\item[(2)] Given a morphism of $\Lambda$-modules $f:X\longrightarrow Y$, define $^{\sigma}\hspace{-2pt}f$ : $^{\sigma}\hspace{-2pt}X$
$\longrightarrow$ $^{\sigma}\hspace{-2pt}Y$ by $^{\sigma}\hspace{-2pt}f(x)=f(x)$ for each $x\in$ $^{\sigma}\hspace{-2pt}X$.
\end{itemize}
Then $^{\sigma}\hspace{-2pt}f$ is also a $\Lambda$-homomorphism. Indeed, for $x\in X$ and $\lambda\in\Lambda$ we have
$^{\sigma}\hspace{-2pt}f(\lambda\cdot x)=f(\sigma^{-1}(\lambda)x)=\sigma^{-1}(\lambda)f(x)=\lambda\cdot$ $^{\sigma}\hspace{-2pt}f(x)$.
Using the above setup, we can define a functor $F_{\sigma}$
by $F_{\sigma}(X)$=$^{\sigma}\hspace{-2pt}X$ and $F_{\sigma}(f)$=$^{\sigma}\hspace{-2pt}f$ for $X,Y\in \mathrm{mod}\Lambda$ and homomorphism $f:X\longrightarrow Y$.

\vspace{0.2cm}

Then one can get the following observation immediately.
\begin{proposition}
$F_{\sigma}:\mathrm{mod}\Lambda\longrightarrow \mathrm{mod}\Lambda$ is an automorphism and the inverse is $F_{\sigma^{-1}}$.
\end{proposition}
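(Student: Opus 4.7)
The plan is to verify directly from the definitions that $F_\sigma$ and $F_{\sigma^{-1}}$ are mutually inverse functors on $\mathrm{mod}\Lambda$. Since the construction is very explicit — the underlying vector space and the underlying set-theoretic map on morphisms are unchanged by $F_\sigma$ — I expect no genuine obstacle, only routine verifications that the group action axioms (a)--(c) give exactly what is needed.

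First I would check that $F_\sigma$ is well-defined as a functor from $\mathrm{mod}\Lambda$ to itself. For this, one needs ${}^\sigma X$ to genuinely be a $\Lambda$-module under the twisted action $\lambda\cdot x=\sigma^{-1}(\lambda)x$. Associativity follows because $\sigma^{-1}$ is an algebra automorphism by axiom (a): $(\lambda_1\lambda_2)\cdot x = \sigma^{-1}(\lambda_1\lambda_2)x = \sigma^{-1}(\lambda_1)\sigma^{-1}(\lambda_2)x = \lambda_1\cdot(\lambda_2\cdot x)$, and unitality follows from $\sigma^{-1}(1)=1$. The text already verifies ${}^\sigma f$ is $\Lambda$-linear, and since ${}^\sigma f$ and ${}^\sigma g$ act as the same set maps as $f$ and $g$, functoriality ($F_\sigma(f\circ g)=F_\sigma(f)\circ F_\sigma(g)$ and $F_\sigma(\mathrm{id}_X)=\mathrm{id}_{{}^\sigma X}$) is immediate.

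Next I would compute the composition $F_{\sigma^{-1}}\circ F_\sigma$ on an object $X$. The underlying vector space of $F_{\sigma^{-1}}(F_\sigma(X))$ is still $X$, and the twisted action reads
\[
\lambda\bullet x \;=\; (\sigma^{-1})^{-1}(\lambda)\cdot_\sigma x \;=\; \sigma(\lambda)\cdot_\sigma x \;=\; \sigma^{-1}\bigl(\sigma(\lambda)\bigr)x \;=\; (\sigma^{-1}\sigma)(\lambda)\,x \;=\; 1(\lambda)\,x \;=\; \lambda x,
\]
where the penultimate step uses axiom (b) and the last uses axiom (c). Hence $F_{\sigma^{-1}}(F_\sigma(X))=X$ as $\Lambda$-modules. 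On a morphism $f\colon X\to Y$, we have $F_{\sigma^{-1}}(F_\sigma(f))={}^{\sigma^{-1}}({}^\sigma f)$, which as a set map equals ${}^\sigma f$, which in turn equals $f$. Therefore $F_{\sigma^{-1}}\circ F_\sigma=\mathrm{Id}_{\mathrm{mod}\Lambda}$ on the nose (not merely up to natural isomorphism).

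The opposite composition $F_\sigma\circ F_{\sigma^{-1}}=\mathrm{Id}_{\mathrm{mod}\Lambda}$ is verified by the symmetric computation, swapping the roles of $\sigma$ and $\sigma^{-1}$. This establishes that $F_\sigma$ is an isomorphism of categories with inverse $F_{\sigma^{-1}}$. The only care point in the whole argument is the order of compositions in axiom (b) together with keeping track of which twist is being applied at each stage; everything else is formal.
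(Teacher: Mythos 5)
Your proof is correct and is precisely the routine verification the paper has in mind when it says the proposition follows ``immediately'' (the paper itself supplies no proof). The key steps --- that the twisted action makes ${}^{\sigma}X$ a module via axiom (a), and that the double twist composes to the trivial twist via axioms (b) and (c), yielding $F_{\sigma^{-1}}\circ F_{\sigma}=\mathrm{Id}$ on the nose --- are exactly right, including the care taken with the direction of the inverse in the definition $\lambda\cdot x=\sigma^{-1}(\lambda)x$.
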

To state our main results in this paper, we need the following definition from [RR].

\begin{definition}
The {\it skew group algebra} $\Lambda G$ that $G$ acts on $\Lambda$ is given by the following data:
\begin{itemize}
\item[(a)] As an abelian group, $\Lambda G$ is the free left $\Lambda$-module with the elements of $G$ as a basis.

\item[(b)] For all $\lambda_{\sigma}$ and $\lambda_{\tau}$ in $\Lambda$
and $\sigma$ and $\tau$ in $G$, the multiplication in $\Lambda G$ is defined by the rule $(\lambda_{\sigma}\sigma)(\lambda_{\tau}\tau)
=(\lambda_{\sigma}\sigma(\lambda_{\tau}))\sigma\tau$.
\end{itemize}
\end{definition}

\vspace{0.2cm}

In particular, when $G$ is a finite group whose order is invertible in $\Lambda$, the natural inclusion $\Lambda\hookrightarrow\Lambda G$ induces the restriction functor $H: \mathrm{mod}\Lambda G\longrightarrow \mathrm{mod}\Lambda$ and the induction functor $F: \mathrm{mod}\Lambda\longrightarrow \mathrm{mod}\Lambda G$ which are the same as above when $\Gamma=\Lambda G$. We recall the following facts from [RR, pp.227, 235].
\begin{proposition}

\begin{itemize}
\item[]
\item[(a)] Let $M\in \mathrm{mod}\Lambda$ and $\sigma\in G$. We have isomorphisms of
$FM\cong \bigoplus_{\sigma\in G}(\sigma\otimes M)\cong\bigoplus_{\sigma\in G}$$^{\sigma}\hspace{-2pt}M$ as
$\Lambda$-modules. Then $HFM\cong \bigoplus_{\sigma\in G}(\sigma\otimes M)\cong\bigoplus_{\sigma\in G}$$^{\sigma}\hspace{-2pt}M$.
\item[(b)] The natural morphism $I\rightarrow HF$ is a split monomorphism of functions, where
$I: \mathrm{mod}\Lambda\rightarrow \mathrm{mod}\Lambda$ is the identity functor. Dually, the natural morphism
$FH\rightarrow J$ is a split epimorphism of functions, where $J: \mathrm{mod}\Lambda G\rightarrow \mathrm{mod}\Lambda G$
is the identity functor.
\end{itemize}
\end{proposition}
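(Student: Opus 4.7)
For part (a), the plan is to exploit the decomposition $\Lambda G = \bigoplus_{\sigma \in G} \Lambda\sigma$ as a left $\Lambda$-module, which comes directly from the definition of the skew group algebra. Tensoring with $M$ and distributing gives $FM = \bigoplus_{\sigma \in G}(\Lambda\sigma \otimes_\Lambda M)$, and the $\sigma$-summand is by definition $\sigma \otimes M$. To identify $\Lambda\sigma \otimes_\Lambda M$ with ${}^{\sigma}M$, I first compute the right $\Lambda$-action on $\Lambda\sigma$: the multiplication rule gives $(\lambda\sigma)(\mu\cdot 1_G) = (\lambda\sigma(\mu))\sigma$, so $\Lambda\sigma$ is the $\Lambda$-bimodule whose underlying left module is $\Lambda$ and whose right action is twisted by $\sigma$. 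The map $\lambda\sigma \otimes m \mapsto \sigma^{-1}(\lambda)m$ is then a well-defined left $\Lambda$-linear isomorphism onto ${}^{\sigma}M$, since the tensor relation matches the twist and $\Lambda$ acts on ${}^{\sigma}M$ by $\lambda \cdot m = \sigma^{-1}(\lambda)m$. The assertion about $HFM$ is automatic, because $H$ is simply restriction of scalars along the inclusion $\Lambda \hookrightarrow \Lambda G$, so $HFM$ has the same underlying $\Lambda$-module as $FM$.

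For the first assertion of (b), the natural morphism $\eta_M: M \to HFM$ is the unit of the adjunction, $m \mapsto 1 \otimes m$. Under the decomposition of part (a) this identifies $M$ with the $\sigma = 1_G$ summand, which is a $\Lambda$-direct summand of $HFM$; projection onto it gives a natural $\Lambda$-linear retraction of $\eta_M$, so $I \to HF$ is a split monomorphism. (This argument does not require $|G|$ invertible.) For the dual statement, the counit $\epsilon_N: FHN \to N$ sends $\gamma \otimes n \mapsto \gamma \cdot n$, and I propose the averaged section
\[
s_N(n) := \frac{1}{|G|}\sum_{\sigma \in G} \sigma \otimes \sigma^{-1}n,
\]
which is well defined since $|G|$ is invertible in $\Lambda$. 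The identity $\epsilon_N \circ s_N = \mathrm{id}_N$ is immediate from $\sigma\cdot\sigma^{-1} n = n$.

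The main obstacle is to check that $s_N$ is actually $\Lambda G$-linear. Linearity over the group is an easy reindexing: replacing $\sigma$ by $\tau\sigma'$ in $s_N(\tau n)$ recovers $\tau \cdot s_N(n)$. Linearity over $\Lambda$ is the delicate step. Expanding $s_N(\lambda n) = \frac{1}{|G|}\sum_\sigma \sigma \otimes \sigma^{-1}(\lambda)\sigma^{-1}(n)$, one must slide the scalar $\sigma^{-1}(\lambda)$ across the tensor; using the identity $\sigma \cdot \sigma^{-1}(\lambda) = \lambda\sigma$ in $\Lambda G$ (another instance of the skew multiplication rule), each summand becomes $\lambda\sigma \otimes \sigma^{-1}(n)$, and factoring out $\lambda$ yields $s_N(\lambda n) = \lambda \cdot s_N(n)$. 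Naturality of $s_N$ in $N$ is then manifest from the formula. The hypothesis that $|G|$ is invertible is essential: no analogous averaged splitting can be constructed without it.
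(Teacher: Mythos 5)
Your proof is correct and complete: the bimodule decomposition $\Lambda G=\bigoplus_{\sigma\in G}\Lambda\sigma$, the identification $\Lambda\sigma\otimes_{\Lambda}M\cong{}^{\sigma}\hspace{-2pt}M$ via $\lambda\sigma\otimes m\mapsto\sigma^{-1}(\lambda)m$, the unit landing in the $1_G$-summand, and the averaged section $n\mapsto\frac{1}{|G|}\sum_{\sigma\in G}\sigma\otimes\sigma^{-1}n$ with the $\Lambda G$-linearity check via $\sigma\cdot\sigma^{-1}(\lambda)=\lambda\sigma$ are all carried out accurately. The paper itself gives no proof of this proposition (it is recalled from [RR, pp.\ 227, 235] as a known fact), so there is nothing internal to compare against; your argument is exactly the standard Reiten--Riedtmann one, and your side remark that the splitting of $I\to HF$ needs no hypothesis on $|G|$ while the splitting of $FH\to J$ genuinely requires $|G|$ invertible is a worthwhile clarification that the paper's statement leaves implicit.
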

For the convenience of the readers, we give an easy example to understand skew group algebras. We refer to [RR] for more information.

\begin{example}
Let $\Lambda$ be the path algebra of the quiver Q. The cyclic group $G=\mathbb{Z}/2\mathbb{Z}$ acts on $\Lambda$ by switching
$2$ and $2'$, $\alpha$ and $\beta$ and fixing the vertex $1$.
Then the quivers of $\Lambda$ and $\Lambda G$ are as follows:
$$\xymatrix{
&&&2\ar@/^1.5pc/@{<.>}[dd]^{G} &&&&&1\ar@/_1.5pc/@{<.>}[dd]^{G}\ar[dr]^{\gamma} &&&\\
&Q=\hspace{-15pt}&1\ar[ur]^{\alpha}\ar[dr]_{\beta}&& &&&Q'=\hspace{-15pt}&&2&\\
&&&2'&&&&&1'\ar[ur]_{\delta}&&&
}$$
\end{example}

\section{Homological conjectures}

\vspace{0.2cm}

Let $\Lambda$ be an artin algebra and $\Gamma$ be its excellent extension. Since $F$ and $H$ preserve injective modules, we have the following result which states that $\Lambda$ and $\Lambda G$ have
the same self-injective dimension.

\begin{lemma}
${\rm id\,}\Lambda{_{\Lambda}}={\rm id\,}\Gamma{_{\Gamma}}$ and ${\rm id\,}{_{\Lambda}}\Lambda={\rm id\,}{_{\Gamma}}\Gamma.$
In particular, $\Lambda$ is self-injective(resp. Gorenstein) if and only if $\Gamma$ is self-injective(resp. Gorenstein).
\end{lemma}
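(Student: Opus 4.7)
The plan is to compare minimal injective resolutions through the two adjoint pairs $(F,H)$ and $(H,F)$ from Lemma~2.4, exploiting that each of $F$ and $H$ is exact and preserves injective modules. Consequently either functor transports an injective resolution of length $d$ to an injective resolution of length at most $d$. Together with the canonical identifications $F({_{\Lambda}}\Lambda)=\Gamma\otimes_{\Lambda}\Lambda\cong {_{\Gamma}}\Gamma$ and $H({_{\Gamma}}\Gamma)={\rm Hom}_{\Gamma}(\Gamma,\Gamma)\cong {_{\Lambda}}\Gamma$, and with the freeness clause of the excellent extension that makes ${_{\Lambda}}\Lambda$ a direct summand of ${_{\Lambda}}\Gamma$, the two inequalities bounding each self-injective dimension by the other will fit together.

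Concretely, I would first take a minimal injective resolution $0\to {_{\Lambda}}\Lambda\to I^{0}\to\cdots\to I^{d}\to 0$ of length $d={\rm id\,}{_{\Lambda}}\Lambda$ and apply $F$. Exactness of $F$ and injective-preservation give an injective resolution of ${_{\Gamma}}\Gamma$ of length at most $d$, so ${\rm id\,}{_{\Gamma}}\Gamma\leq {\rm id\,}{_{\Lambda}}\Lambda$. For the reverse direction, take a minimal injective resolution of ${_{\Gamma}}\Gamma$ of length $e={\rm id\,}{_{\Gamma}}\Gamma$ and apply $H$, producing an injective resolution of ${_{\Lambda}}\Gamma$ of length at most $e$ in ${\rm mod\,}\Lambda$. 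Hence ${\rm id\,}{_{\Lambda}}\Gamma\leq e$. Since $_{\Lambda}\Gamma=\bigoplus_{i=1}^{n}\Lambda\gamma_{i}$ is free with each $\Lambda\gamma_{i}\cong {_{\Lambda}}\Lambda$, the module $_{\Lambda}\Lambda$ is a direct summand of $_{\Lambda}\Gamma$, and monotonicity of injective dimension along summands yields ${\rm id\,}{_{\Lambda}}\Lambda\leq {\rm id\,}{_{\Lambda}}\Gamma\leq {\rm id\,}{_{\Gamma}}\Gamma$. Combining both inequalities gives ${\rm id\,}{_{\Lambda}}\Lambda={\rm id\,}{_{\Gamma}}\Gamma$. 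The equality ${\rm id\,}\Lambda_{\Lambda}={\rm id\,}\Gamma_{\Gamma}$ is obtained by the mirror argument on the right-module side, the excellent-extension axioms being manifestly left-right symmetric.

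With the two equalities in hand, the ``In particular'' assertions are immediate: $\Lambda$ is self-injective precisely when ${\rm id\,}{_{\Lambda}}\Lambda=0$ (equivalently ${\rm id\,}\Lambda_{\Lambda}=0$), and $\Lambda$ is Gorenstein precisely when both one-sided self-injective dimensions are finite; each of these numerical conditions transfers between $\Lambda$ and $\Gamma$ through the equalities just established.

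I do not anticipate any genuine obstacle here. Once Lemma~2.4 supplies exactness and injective-preservation for both functors, the argument is a formal length count. The only delicate ingredient, hidden in the monotonicity step, is that ${_{\Lambda}}\Lambda$ is a true direct summand of ${_{\Lambda}}\Gamma$; this is where the full \emph{excellent} (as opposed to merely weak excellent) hypothesis is used, via the common free basis $\gamma_{1},\ldots,\gamma_{n}$, which splits ${_{\Lambda}}\Gamma$ into $n$ copies of ${_{\Lambda}}\Lambda$.
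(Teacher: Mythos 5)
Your proposal is correct and follows essentially the same route as the paper: the paper compresses your argument into the single chain ${\rm id\,}\Gamma_{\Gamma}={\rm id\,}F\Lambda\leq{\rm id\,}\Lambda\leq{\rm id\,}HF\Lambda={\rm id\,}H\Gamma\leq{\rm id\,}\Gamma_{\Gamma}$, using exactly the same three ingredients (exactness and injective-preservation of $F$ and $H$ from the two adjoint pairs, plus $\Lambda$ being a direct summand of $_{\Lambda}\Gamma=HF\Lambda$). Your unpacking via explicit injective resolutions and the closing remark on where the freeness hypothesis enters are faithful elaborations of the same proof.
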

\begin{proof}
Since $F$ and $H$ preserve injective modules, it follows that
$${\rm id\,}\Gamma{_{\Gamma}}={\rm id\,}F\Lambda{_{\Lambda}}\leq{\rm id\,}\Lambda{_{\Lambda}}\leq{\rm id\,}HF\Lambda{_{\Lambda}}
={\rm id\,}H\Gamma{_{\Gamma}}\leq{\rm id\,}\Gamma{_{\Gamma}}.$$ Thus ${\rm id\,}\Lambda{_{\Lambda}}={\rm id\,}\Gamma{_{\Gamma}}$.
Similarly, we have ${\rm id\,}{_{\Lambda}}\Lambda={\rm id\,}{_{\Gamma}}\Gamma.$
\end{proof}

\begin{theorem}
\begin{itemize}
\item[]
\item[(1)] $\Lambda$ satisfies GSC if and only if so does $\Gamma$.
\item[(2)] $\Lambda$ satisfies FDC if and only if so does $\Gamma$.
\end{itemize}
\end{theorem}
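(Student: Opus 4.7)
For part (1), the plan is essentially a one-line application of Lemma 3.1. That lemma already records the equalities ${\rm id\,}\Lambda_\Lambda = {\rm id\,}\Gamma_\Gamma$ and ${\rm id\,}{_\Lambda}\Lambda = {\rm id\,}{_\Gamma}\Gamma$, so the identity ${\rm id\,}\Lambda_\Lambda = {\rm id\,}{_\Lambda}\Lambda$ (GSC for $\Lambda$) translates term by term into ${\rm id\,}\Gamma_\Gamma = {\rm id\,}{_\Gamma}\Gamma$ (GSC for $\Gamma$). No further argument is required.

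For part (2), I plan to establish the stronger numerical equality ${\rm findim\,}\Lambda = {\rm findim\,}\Gamma$ by proving two pointwise identities: ${\rm pd\,}_\Gamma M = {\rm pd\,}_\Lambda HM$ for every $M \in \mathrm{mod}\,\Gamma$, and ${\rm pd\,}_\Lambda N = {\rm pd\,}_\Gamma FN$ for every $N \in \mathrm{mod}\,\Lambda$. The inequalities ${\rm pd\,}_\Lambda HM \leq {\rm pd\,}_\Gamma M$ and ${\rm pd\,}_\Gamma FN \leq {\rm pd\,}_\Lambda N$ are immediate, since $F$ and $H$ are exact and preserve projectives (noted right after Lemma 2.4): apply the relevant functor to any projective resolution. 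The reverse inequalities reduce to two splitting statements, namely $N \mid HFN$ as $\Lambda$-modules and $M \mid FHM$ as $\Gamma$-modules; once these are available, the chains ${\rm pd\,}_\Gamma M \leq {\rm pd\,}_\Gamma FHM \leq {\rm pd\,}_\Lambda HM$ and ${\rm pd\,}_\Lambda N \leq {\rm pd\,}_\Lambda HFN \leq {\rm pd\,}_\Gamma FN$ close the circle. The FDC equivalence then falls out formally: if ${\rm findim\,}\Lambda = d < \infty$, every $M \in \mathrm{mod}\,\Gamma$ of finite $\Gamma$-projective dimension satisfies ${\rm pd\,}_\Gamma M = {\rm pd\,}_\Lambda HM \leq d$, and symmetrically in the other direction.

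The two splittings are where the excellent extension hypothesis enters. For $N \mid HFN$, writing $\Gamma = \Lambda \oplus \bigoplus_{i \geq 2}\gamma_i \Lambda$ as right $\Lambda$-modules (using the common free basis with $\gamma_1 = 1$), the tensor product $HFN = \Gamma \otimes_\Lambda N$ visibly contains $1 \otimes N \cong N$ as a $\Lambda$-summand. For $M \mid FHM$, the multiplication $\mu \colon FHM \to M$ is a $\Gamma$-linear epimorphism admitting the $\Lambda$-linear section $m \mapsto 1 \otimes m$, so $\ker\mu$ is a $\Lambda$-direct summand of $FHM$; the right $\Lambda$-projective property in Definition 2.1(1) then promotes $\ker\mu$ to a $\Gamma$-direct summand, so that $M$ is a $\Gamma$-summand of $FHM$. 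I expect the $\Gamma$-splitting $M \mid FHM$ to be the main technical obstacle—the $\Lambda$-splitting is essentially formal, whereas this one crucially needs the right $\Lambda$-projective condition—but once Definition 2.1(1) is unpacked correctly, the argument should fall out cleanly and the rest of the proof of part (2) is bookkeeping with suprema.
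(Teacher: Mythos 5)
Your part (1) coincides exactly with the paper's proof: a direct application of Lemma 3.1. For part (2) the paper gives no argument at all---it simply cites [HS, Proposition 3.6(1)]---whereas you reconstruct a self-contained proof of the stronger statement ${\rm findim\,}\Lambda={\rm findim\,}\Gamma$. Your reconstruction is correct and is essentially the standard argument behind the cited result: the inequalities ${\rm pd\,}_\Lambda HM\leq{\rm pd\,}_\Gamma M$ and ${\rm pd\,}_\Gamma FN\leq{\rm pd\,}_\Lambda N$ from exactness and preservation of projectives, closed up by the two splittings $N\mid HFN$ (from the free bimodule decomposition $\Gamma=\bigoplus_i\gamma_i\Lambda$ with $\gamma_1=1$, which uses that each $\gamma_i\Lambda=\Lambda\gamma_i$ is a sub-bimodule) and $M\mid FHM$ (from the $\Lambda$-split multiplication map promoted to a $\Gamma$-splitting by the $\Lambda$-projectivity condition in Definition 2.1(1)). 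Two small points of care: the convention $\gamma_1=1$ is not written explicitly in the paper's Definition 2.1, though it is standard in [P, Bo, HS] and implicitly used elsewhere in the paper (e.g.\ when it extracts $I^i$ as a summand of $HJ^i$ ``since $_\Lambda\Gamma$ is free''); and Definition 2.1(1) is phrased for right modules while your application of it to $\ker\mu\subseteq FHM$ is on the left, so one should invoke the known fact that excellent extensions are left--right symmetric with respect to relative projectivity. Neither issue is a genuine gap; your argument buys the reader an explicit proof where the paper offers only a reference.
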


\begin{proof}
(1) By Lemma 3.1, it follows that $\Lambda$ satisfies GSC if and only if ${\rm id\,}\Lambda{_{\Lambda}}={\rm id\,}{_{\Lambda}}\Lambda$
if and only if ${\rm id\,}\Gamma{_{\Gamma}}={\rm id\,}{_{\Gamma}}\Gamma$ if and only if $\Gamma$ satisfies GSC.

(2) By [HS, Proposition 3.6(1)], we get the assertion.
\end{proof}

\begin{theorem}
\begin{itemize}
\item[]
\item[(1)] $\Lambda$ satisfies AGC if and only if so does $\Gamma$.
\item[(2)] $\Lambda$ satisfies NC if and only if so does $\Gamma$.
\end{itemize}
\end{theorem}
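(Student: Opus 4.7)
The plan is to exploit the two facts already at our disposal: the induction and restriction functors $F=\Gamma\otimes_{\Lambda}-$ and $H=\mathrm{Hom}_{\Gamma}(\Gamma,-)$ are exact and preserve both projectivity and injectivity (as noted after Lemma 2.4), and the left $\Lambda$-module $\Lambda$ is a direct summand of $H\Gamma={}_{\Lambda}\Gamma$, because an excellent extension supplies a free $\Lambda$-basis that we may take to contain $1$, while $F\Lambda=\Gamma$ in $\mathrm{mod}\,\Gamma$. Lemma 3.1 will then let us convert the AGC (respectively NC) hypothesis, once it has been transported across the extension, into the Gorenstein (respectively self-injective) conclusion on the other side.

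For (1), suppose first that $\Gamma$ satisfies AGC and let $0\to\Lambda\to I^{0}\to I^{1}\to\cdots$ be the minimal injective resolution of $\Lambda$ with $\mathrm{pd}_{\Lambda}I^{i}\leq i$. Applying the exact functor $F$ yields an injective resolution of $F\Lambda=\Gamma$ whose $i$-th term $FI^{i}$ satisfies $\mathrm{pd}_{\Gamma}FI^{i}\leq \mathrm{pd}_{\Lambda}I^{i}\leq i$, since $F$ carries a projective resolution of $I^{i}$ to one of $FI^{i}$. The minimal injective resolution $0\to\Gamma\to J^{0}\to J^{1}\to\cdots$ is a termwise direct summand of $FI^{\bullet}$, so $\mathrm{pd}_{\Gamma}J^{i}\leq i$; AGC for $\Gamma$ then forces $\Gamma$ to be Gorenstein, and Lemma 3.1 transports this to $\Lambda$. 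The reverse direction is symmetric: start with the minimal injective resolution of $\Gamma$ satisfying $\mathrm{pd}_{\Gamma}J^{i}\leq i$, apply the exact, projective-preserving functor $H$ to obtain an injective resolution of $H\Gamma={}_{\Lambda}\Gamma$ with $\mathrm{pd}_{\Lambda}HJ^{i}\leq i$, extract the minimal injective resolution of ${}_{\Lambda}\Gamma$ as a termwise summand, and finally extract the minimal injective resolution of the direct summand $\Lambda$. Applying AGC for $\Lambda$ and Lemma 3.1 closes the loop.

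The proof of (2) is the very same argument, with ``$\mathrm{pd}_{\Lambda}I^{i}\leq i$'' and ``$\mathrm{pd}_{\Gamma}J^{i}\leq i$'' everywhere replaced by ``$I^{i}$ projective'' and ``$J^{i}$ projective'' (using that $F$ and $H$ preserve projectivity directly, with no degree count), and the self-injective half of Lemma 3.1 used at the end to transport the conclusion between $\Lambda$ and $\Gamma$.

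The main technical subtlety, and the only one I foresee, is the claim that the minimal injective resolution sits as a termwise direct summand of any other injective resolution. This rests on the standard Krull--Schmidt-type fact that over an artin algebra any injective resolution of a module decomposes as the direct sum of its minimal injective resolution and a contractible complex of injectives, and any such contractible complex is a direct sum of two-term complexes $I\xrightarrow{\,\mathrm{id}\,}I$; minimality therefore passes to each individual degree. Granted this, the whole argument reduces to bookkeeping with exactness, projective dimensions, and the adjunctions $(F,H)$ and $(H,F)$ supplied by Lemma 2.4.
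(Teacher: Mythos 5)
Your proof is correct and follows essentially the same route as the paper: apply $F$ (resp.\ $H$) to the minimal injective resolution, extract the minimal resolution on the other side as a termwise direct summand (using freeness of ${}_{\Lambda}\Gamma$ for the converse direction), carry the projective-dimension bounds (resp.\ projectivity) across, and finish with Lemma 3.1. Your explicit justification of the termwise-summand claim is a detail the paper leaves implicit, but the argument is the same.
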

\begin{proof}
(1) Assume that $\Gamma$ satisfies AGC. Let
\begin{equation}
0\rightarrow\Lambda\rightarrow I^{0}\rightarrow I^{1}\rightarrow\cdots
\end{equation}
be a minimal injective resolution of the $\Lambda$-module $\Lambda$ with pd $I^{i}\leq i$ for any $i\geq 0$.
Applying the functor $F$ to (3.1) we have an injective resolution of $\Gamma$ as a $\Gamma$-module:
$$0\rightarrow\Gamma\rightarrow FI^{0}\rightarrow FI^{1}\rightarrow\cdots.$$
Take a minimal injective resolution of $\Gamma$ as a $\Gamma$-module: $$0\rightarrow\Gamma\rightarrow J^{0}\rightarrow J^{1}\rightarrow\cdots.$$
Then we have that $J^{i}$ is a direct summand of $FI^{i}$ for any $i\geq 0$. Therefore
${\rm pd\,}J^{i}\leq {\rm pd\,}FI^{i}\leq {\rm pd\,}I^{i}\leq i$ for any $i\geq0$. Then $\Gamma$ is
Gorenstein since $\Gamma$ satisfies AGC. By Lemma 3.1 we know that $\Lambda$ is Gorenstein.

Conversely, assume $\Lambda$ satisfies AGC. Take a minimal injective resolution of $\Gamma$ as a $\Gamma$-module:
\begin{equation}
0\rightarrow\Gamma\rightarrow J^{0}\rightarrow J^{1}\rightarrow\cdots
\end{equation}
with pd $J^{i}\leq i$ for any $i\geq 0$.
Applying the functor $H$ to (3.2) we get an injective resolution of $H\Gamma$ as a $\Lambda$-module:
$$0\rightarrow H\Gamma \rightarrow HJ^{0}\rightarrow HJ^{1}\rightarrow\cdots.$$
If $$0\rightarrow\Lambda\rightarrow I^{0}\rightarrow I^{1}\rightarrow\cdots$$ is a minimal injective resolution of $\Lambda$-module $\Lambda$,
then $I^{i}$ is a direct summand of $HJ^{i}$ for any $i\geq 0$ since $H\Gamma={_{\Lambda}\Gamma}$ is free. It follows that ${\rm pd\,}I^{i}\leq {\rm pd\,}HJ^{i}\leq {\rm pd\,}J^{i}\leq i$
for any $i\geq0$. Then $\Lambda$ is Gorenstein since $\Lambda$ satisfies AGC. By Lemma 3.1 we have that $\Gamma$ is Gorenstein.

\vspace{0.2cm}

(2) Assume that $\Gamma$ satisfies NC. Let
\begin{equation}
0\rightarrow\Lambda\rightarrow I^{0}\rightarrow I^{1}\rightarrow\cdots
\end{equation}
be a minimal injective resolution of the $\Lambda$-module $\Lambda$ with $I^{i}$ is projective for any $i\geq 0$.
Applying the functor $F$ to (3.3) we have an injective resolution of $\Gamma$ as a $\Gamma$-module:
$$0\rightarrow\Gamma\rightarrow FI^{0}\rightarrow FI^{1}\rightarrow\cdots.$$
Take a minimal injective resolution of $\Gamma$ as a $\Gamma$-module:
$$0\rightarrow\Gamma\rightarrow J^{0}\rightarrow J^{1}\rightarrow\cdots.$$
Then we have that $J^{i}$ is a direct summand of $FI^{i}$ for any $i\geq 0$. Therefore $J^{i}$ is projective
for any $i\geq 0$. Thus $\Gamma$ is self-injective since $\Gamma$ satisfies NC. By Lemma 3.1 we have that $\Lambda$ is self-injective.

Conversely, assume $\Lambda$ satisfies NC. Take a minimal injective resolution of $\Gamma$ as a $\Gamma$-module:
\begin{equation}
0\rightarrow\Gamma\rightarrow J^{0}\rightarrow J^{1}\rightarrow\cdots
\end{equation}
with $J^{i}$ is projective for any $i\geq 0$.
Applying the functor $H$ to (3.4) we get an injective resolution of $H\Gamma$ as a $\Lambda$-module:
$$0\rightarrow H\Gamma\rightarrow HJ^{0}\rightarrow HJ^{1}\rightarrow\cdots.$$
If $$0\rightarrow\Lambda\rightarrow I^{0}\rightarrow I^{1}\rightarrow\cdots$$ is a minimal injective resolution of $\Lambda$-module $\Lambda$,
then $I^{i}$ is a direct summand of $HJ^{i}$ for any $i\geq 0$ since $H\Gamma={_{\Lambda}\Gamma}$ is free. It follows that $I^{i}$ is projective for any $i\geq0$.
Then $\Lambda$ is self-injective since $\Lambda$ satisfies NC. By Lemma 3.1 we have that $\Gamma$ is self-injective.
\end{proof}

In particular, let $\Lambda$ be an artin algebra and $G$ be a finite group whose order $n$ is invertible in $\Lambda$ acting on $\Lambda$, now we connect skew group algebras with homological conjectures. Before doing this, we introduce the notion of $G$-stable.
\begin{definition}
A $\Lambda$-module $X$ is called $G$-stable if  ${^{\sigma}X}\cong X$ for any $\sigma\in G$. $\Lambda$ is called $G$-stable if it is $G$-stable as a left $\Lambda$-module.
\end{definition}

The following result plays a crucial role in the sequel.

\begin{proposition}
Let $M, N\in{\rm mod\,}\Lambda$ satisfying ${\rm Ext}_{\Lambda}^{i}(M, N)=0$ with $i\geq 0$. If $N$ is $G$-stable, then ${\rm Ext}_{\Lambda G}^{i}(FM, FN)=0$.
\end{proposition}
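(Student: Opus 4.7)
The plan is to reduce the vanishing of $\Ext^{i}_{\Lambda G}(FM,FN)$ to the vanishing of $\Ext^{i}_{\Lambda}(M,N)$ via the adjunction isomorphism and Proposition 2.7(a). The key observation is that we may compute Ext through a projective resolution of $M$, push it across $F$, and then use the adjunction $(F,H)$ to bring everything back to the $\Lambda$-side, where the hypothesis directly applies once we have identified $HFN$.

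First I would pick a projective resolution $P_{\bullet}\to M$ in $\mod\Lambda$. Because Lemma 2.4 tells us that $(F,H)$ and $(H,F)$ are both adjoint pairs with $F,H$ exact, the functor $F$ preserves projectives, so $FP_{\bullet}\to FM$ is a projective resolution in $\mod\Lambda G$. Applying the adjunction isomorphism $\Hom_{\Lambda G}(F-,-)\cong \Hom_{\Lambda}(-,H-)$ termwise and taking cohomology yields
\[
\Ext^{i}_{\Lambda G}(FM,FN)\;\cong\;\Ext^{i}_{\Lambda}(M,HFN).
\]
This is the main computational step and the only place where care is needed, since we must check that the adjunction isomorphism is natural in both variables on the level of cochain complexes; this is standard and follows from the exactness of $F$ and $H$.

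Next I would unpack $HFN$ using Proposition 2.7(a), which gives a natural isomorphism
\[
HFN\;\cong\;\bigoplus_{\sigma\in G}{}^{\sigma}\!N
\]
as $\Lambda$-modules. The $G$-stability hypothesis on $N$ now enters: for each $\sigma\in G$ we have ${}^{\sigma}\!N\cong N$, so $HFN\cong N^{n}$ with $n=|G|$. Combining with the displayed Ext isomorphism,
\[
\Ext^{i}_{\Lambda G}(FM,FN)\;\cong\;\Ext^{i}_{\Lambda}(M,N)^{n}\;=\;0.
\]

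The only real obstacle is the first step, namely justifying carefully that the adjunction $(F,H)$ upgrades to an isomorphism of Ext groups. Once that is done, the rest is a short formal manipulation; the $G$-stability assumption is precisely what is needed to collapse the direct sum $\bigoplus_{\sigma}{}^{\sigma}\!N$ into copies of $N$. Without $G$-stability one would only obtain vanishing of $\Ext^{i}_{\Lambda}(M,{}^{\sigma}\!N)$ for all $\sigma$, which is a strictly stronger hypothesis than $\Ext^{i}_{\Lambda}(M,N)=0$.
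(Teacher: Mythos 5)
Your proposal is correct and follows essentially the same route as the paper: a projective resolution of $M$ pushed through the exact projective-preserving functor $F$, the adjunction $\Hom_{\Lambda G}(F-,-)\cong\Hom_{\Lambda}(-,H-)$ applied termwise, and Proposition 2.7(a) together with $G$-stability to identify $HFN$ with $N^{n}$. The only cosmetic difference is that the paper treats $i=0$ and $i\geq 1$ separately and inlines the decomposition of $HFN$ into the Hom-complex rather than stating the intermediate isomorphism $\Ext^{i}_{\Lambda G}(FM,FN)\cong\Ext^{i}_{\Lambda}(M,HFN)$.
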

\begin{proof}
If $i=0$, then from the adjoint isomorphism theorem and Proposition 2.7(a) it follows that
$${\rm Hom}_{\Lambda G}(FM,FN)\cong {\rm Hom}_{\Lambda}(M,HFN)\cong\bigoplus_{\sigma\in G}{\rm Hom}_{\Lambda}(M,{^{\sigma}N})\cong ({\rm Hom}_{\Lambda}(M, N))^{n}.$$
We have finished to prove that ${\rm Hom}_{\Lambda G}(FM,FN)=0$.

If $i\geq 1$,
taking a projective resolution of $M$ in $\mathrm{mod}\Lambda$:
\begin{equation}
\cdots\rightarrow P_{1}\rightarrow P_{0}\rightarrow M\rightarrow 0
\end{equation}
then we have a projective resolution of $FM$ by applying the functor $F$: $$\cdots\rightarrow FP_{1}\rightarrow FP_{0}\rightarrow FM\rightarrow 0.$$
Set $P^{\bullet}=(\cdots\rightarrow P_{1}\rightarrow P_{0}\rightarrow0)$. Then $FP^{\bullet}=(\cdots\rightarrow FP_{1}\rightarrow FP_{0}\rightarrow0)$.
It follows that $${\rm Ext}_{\Lambda}^{i}(M, N)=H^{i}({\rm Hom}_{\Lambda}(P^{\bullet}, N))$$ is $i$th-homology of the complex ${\rm Hom}_{\Lambda}(P^{\bullet}, N)$
and $${\rm Ext}_{\Lambda G}^{i}(FM, FN)=H^{i}({\rm Hom}_{\Lambda G}(FP^{\bullet}, FN))$$ is $i$th-homology of the complex ${\rm Hom}_{\Lambda G}(FP^{\bullet}, FN)$.
By the adjoint isomorphism theorem and Proposition 2.7(a) we have ${\rm Hom}_{\Lambda G}(FP^{\bullet}, FN)\cong\bigoplus_{\sigma\in G}{\rm Hom}_{\Lambda}(P^{\bullet}, {^{\sigma}}N)\cong ({\rm Hom}_{\Lambda}(P^{\bullet}, N))^{n}$.
It follows that ${\rm Ext}_{\Lambda G}^{i}(FM, FN)=0$.
\end{proof}

\vspace{0.2cm}

In the following we give a connection between SNC and GNC for $\Lambda$ and that for $\Lambda G$.

\begin{theorem}
If $\Lambda$ is $G$-stable and $\Lambda G$ satisfies SNC (resp. GNC), then so does $\Lambda$.
\end{theorem}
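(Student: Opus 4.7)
The plan is to push the vanishing hypotheses from $\Lambda G$ down to $\Lambda$ using the induction functor $F = \Lambda G\otimes_\Lambda -$, Proposition 3.5, and the decomposition $HF \cong \bigoplus_{\sigma\in G} F_\sigma$ of Proposition 2.7(a). Because $\Lambda$ is assumed $G$-stable and $F\Lambda \cong \Lambda G$ as left $\Lambda G$-modules, Proposition 3.5 applied with $N=\Lambda$ (for every $i$) yields the basic implication
$${\rm Ext}_\Lambda^i(M,\Lambda)=0\ \text{for all } i\ge 0\ \Longrightarrow\ {\rm Ext}_{\Lambda G}^i(FM,\Lambda G)=0\ \text{for all } i\ge 0,$$
which drives both halves of the argument.

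For the SNC half, let $M\in\mathrm{mod}\Lambda$ satisfy ${\rm Ext}_\Lambda^{\ge 0}(M,\Lambda)=0$. The displayed implication gives ${\rm Ext}_{\Lambda G}^{\ge 0}(FM,\Lambda G)=0$, and SNC for $\Lambda G$ then forces $FM=0$, hence $HFM=0$. Since $HFM\cong\bigoplus_{\sigma\in G} {^\sigma M}$ contains $M$ as the $\sigma=1$ summand, we conclude $M=0$, which is SNC for $\Lambda$.

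For the GNC half, argue by contradiction. Suppose some simple $S\in\mathrm{mod}\Lambda$ satisfies ${\rm Ext}_\Lambda^i(S,\Lambda)=0$ for every $i\ge 0$, so the implication gives ${\rm Ext}_{\Lambda G}^i(FS,\Lambda G)=0$ for every $i\ge 0$. To contradict GNC for $\Lambda G$, I need to exhibit a \emph{simple} $\Lambda G$-module with the same total vanishing. My plan is to show that $FS$ is semisimple as a $\Lambda G$-module and then pick any of its simple summands: the $\Lambda$-restriction $HFS\cong\bigoplus_\sigma {^\sigma S}$ is semisimple, since each ${^\sigma S}$ is simple (the functor $F_\sigma$ is an auto-equivalence by Proposition 2.6), and the invertibility of $|G|$ in $\Lambda$ lifts this $\Lambda$-semisimplicity to $\Lambda G$-semisimplicity via a Maschke-type averaging. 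Once $FS=\bigoplus_j T_j$ with each $T_j$ simple over $\Lambda G$, the vanishing of ${\rm Ext}_{\Lambda G}^i(FS,\Lambda G)$ distributes across summands, forcing ${\rm Ext}_{\Lambda G}^i(T_j,\Lambda G)=0$ for every $j$ and every $i\ge 0$, which directly contradicts GNC for $\Lambda G$.

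The main obstacle is the semisimplicity step in the GNC half; everything else is a close analogue of the arguments in Theorems 3.2 and 3.3. I would justify it by inserting a short averaging lemma: given a $\Lambda$-linear projection $\pi:FS\to N$ onto a $\Lambda G$-submodule, the symmetrization $\tilde\pi(x)=\frac{1}{|G|}\sum_{g\in G} g\cdot\pi(g^{-1}\cdot x)$ is a $\Lambda G$-linear projection, so every $\Lambda G$-submodule of $FS$ is a $\Lambda G$-direct summand. Alternatively, one can cite the standard Maschke theorem for skew group algebras, which says that when $|G|^{-1}\in\Lambda$ a $\Lambda G$-module is semisimple if and only if its $\Lambda$-restriction is.
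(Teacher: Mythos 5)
Your proposal is correct and follows essentially the same route as the paper: both halves push the Ext-vanishing up to $\Lambda G$ via Proposition 3.5 (using $G$-stability of $\Lambda$), apply SNC/GNC there, and come back down through the split monomorphism $M\hookrightarrow HFM\cong\bigoplus_{\sigma\in G}{}^{\sigma}M$. The only difference is in the GNC half, where the paper establishes semisimplicity of $FS$ by citing [FJ, Theorem 4] while you prove it directly with a Maschke-type averaging lemma; your self-contained argument is valid (the symmetrized projection is indeed $\Lambda G$-linear because $g\,g^{-1}(\lambda)=\lambda g$ in $\Lambda G$) and makes the proof independent of that reference.
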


\begin{proof}
(1) Assume $\Lambda G$ satisfies SNC. Let $M\in{\rm mod\,}\Lambda$ with ${\rm Ext}_{\Lambda}^{\geq 0}(M, \Lambda)=0$. By Proposition 3.5
we have ${\rm Ext}_{\Lambda G}^{\geq 0}(FM, \Lambda G)=0$. Then $FM=0$ since $\Lambda G$ satisfies SNC. It follows from Proposition 2.7(b)
that $M=0$ as a direct summand of $HFM=0$.

(2) Assume $\Lambda G$ satisfies GNC. Let $S\in{\rm mod\,}\Lambda$ be a simple module with ${\rm Ext}_{\Lambda}^{\geq 0}(S, \Lambda)=0$.
By Proposition 3.5 we have ${\rm Ext}_{\Lambda G}^{\geq 0}(FS, \Lambda G)=0$. By Propositions 2.7(a) and Proposition 2.5, we have that
$HFS\cong \bigoplus_{\sigma\in G}{^{\sigma}S}$ is a semisimple $\Lambda$-module. From [FJ, Theorem 4] we know that $FS$ is a semisimple
$\Lambda G$-module. Set $FS:=\bigoplus_{j\in J'}S_{j}'$, where $S_{j}'$ is simple for any $j\in J'$. Then
${\rm Ext}_{\Lambda G}^{\geq 0}(S_{j}', \Lambda G)=0$ for any $j\in J'$. Since $\Lambda G$ satisfies GNC, $S_{j}'=0$ for any $j\in J'$.
So $FS=0$. By Proposition 2.7(b), we have $S=0$ as a direct summand of $HFS=0$.
\end{proof}

We end this article with the following interesting question:

If $\Gamma$ is an excellent extension of an artin algebra $\Lambda$, does $\Lambda$ satisfy WTC (resp. SNC, GNC, ARC, GPC) if and only if so does $\Gamma$?

\vspace{0.5cm}

{\bf Acknowledgement.}
The author would like to thank Prof. Osamu Iyama for his hospitality during her stay in Nagoya with the support of CSC Fellowship.
Her special thanks are due to Prof. Zhaoyong Huang and Xiaojin Zhang for helpful
conversations on the subject. She also thanks the referee for the useful suggestions. This work was partially supported by NSFC (Grant No. 11571164), the program B for Outstanding PhD candidate of Nanjing University(201602B043) and also supported by the Fundamental Research Funds for the Central Universities(2017B07314).

\end{document}